\theoremstyle{plain}
\newtheorem{theorem}{Theorem}[section]
\newtheorem{proposition}[theorem]{Proposition}
\theoremstyle{definition}
\newtheorem{definition}{Definition}
\theoremstyle{remark}
\newtheorem{remark}{Remark}
\numberwithin{equation}{section}
\newcommand{\Z}{{\mathbb Z}}
\newcommand{\R}{{\mathbb R}}
\newcommand{\N}{{\mathbb N}}
\newcommand{\T}{{\mathbb T}}
\newcommand{\C}{{\mathbb C}}
\newcommand{\J}{{\mathcal J}}
\newcommand{\U}{{\mathcal U}}
\newcommand{\de}{\, \mathrm{d}}
\title[Approximation of diffeomorphisms for quantum state transfers]{Approximation of diffeomorphisms for quantum state transfers}
\author[E. Pozzoli]{Eugenio Pozzoli$^{1}$}%\footrecall{tum}\footrecall{mcml}}
\email{eugenio.pozzoli@univ-rennes.fr}
\address{$^1$Univ Rennes, CNRS, IRMAR - UMR 6625, F-35000 Rennes, France.}
\author[A. Scagliotti]{Alessandro Scagliotti$^{2,3}$}
\email{scag@ma.tum.de}
\address{$^2$CIT School, Technical University of Munich, Garching bei M\"unchen, Germany.}
\address{$^3$Munich Center for Machine Learning (MCML), Munich, Germany.}
\begin{document}

\begin{abstract}
In this paper, we seek to combine two emerging standpoints in control theory.
On the one hand, recent advances in infinite-dimensional geometric control have unlocked a method for controlling (with arbitrary precision and in arbitrarily small times) state transfers for bilinear Schrödinger PDEs posed on a Riemannian manifold $M$. In particular, these arguments rely on controllability results in the group of the diffeomorphisms of $M$. 
On the other hand, using tools of $\Gamma$-convergence, it has been proved that we can phrase the retrieval of a diffeomorphism of $M$ as an ensemble optimal control problem.
More precisely, this is done by employing a control-affine system for \emph{simultaneously} steering a finite swarm of points towards the respective targets.
Here we blend these two theoretical approaches and numerically find control laws driving state transitions (such as eigenstate transfers) in small time in a bilinear Schrödinger PDE posed on the torus. Such systems have experimental relevance and are currently used to model rotational dynamics of molecules, and cold atoms trapped in periodic optical lattices. 
\end{abstract}

\maketitle

\section{Introduction}
\subsection{The Schrödinger equation}
In this work we study bilinear Schrödinger PDEs of the type
\begin{equation}\label{eq:schro}
\begin{cases}
i\partial_t\psi(t,x)=\left(-\Delta+V(x)+\sum_{j=1}^m u_j(t)W_j(x)\right)\psi(t,x), & \\
\psi(0,\cdot)=\psi_0,
\end{cases}
\end{equation}
with $ (t,x) \in (0,T) \times M$, where 
$M$ is a smooth connected boundaryless Riemannian manifold,
$\Delta$ is the Laplace-Beltrami operator of $M$,
the potentials $V, W_1,\dots,W_m\colon M \to \R$ 
and the controls $u_1,\dots,u_m\colon [0,T] \to \R$ are real valued functions. The uncontrolled operator $-\Delta+V$ is referred to as \emph{the drift}. Equation \eqref{eq:schro} describes an infinite-dimensional non-linear control-to-state system. By choosing a locally integrable control signal
$u=(u_1,\dots,u_m)$ and picking up an initial state $\psi_0$
in the unit sphere
\begin{equation} \label{def:S}
\mathcal{S}\coloneqq \{\psi\in L^2(M,\C)\, \mid \|\psi\|_{L^2}=1\},
\end{equation}
$\psi(t;u,\psi_0)$ denotes (if well-defined) the solution of \eqref{eq:schro} at time $t\geq 0$.

System \eqref{eq:schro} models a quantum particle on $M$, with free energy $-\Delta+V$, interacting with external potentials $W_j$, which can be switched on and off. This framework models a variety of physical situations, such as atoms in optical cavities \cite{haroche}, and molecular dynamics \cite{rabitz90}.

We are interested in the approximate controllability of (\ref{eq:schro}).
\begin{definition}
\begin{itemize}
\item The state $\psi_1\in \mathcal{S}$ is said to be approximately reachable from $\psi_0\in \mathcal{S}$ for \eqref{eq:schro}
if, for every $\varepsilon >0$, 
there exist 
a time $T\geq 0$,
a global phase $\theta \in [0,2\pi)$
and a 
control $u\in L^1_{\rm loc}([0,T],\R^m)$ such that 
$$\| \psi( T;u,\psi_0) - \psi_1 e^{i\theta} \|_{L^2} <\varepsilon.$$
\item The state $\psi_1\in \mathcal{S}$ is said to be small-time approximately reachable from $\psi_0\in \mathcal{S}$ for \eqref{eq:schro}
if, for every $\varepsilon >0$, 
there exist 
a time $\tau \in[0,\varepsilon]$,
a global phase $\theta \in [0,2\pi)$
and a 
control $u\in L^1_{\rm loc}([0,\tau],\R^m)$ such that 
$$\| \psi( \tau;u,\psi_0) - \psi_1 e^{i\theta} \|_{L^2} <\varepsilon.$$
\end{itemize}
\end{definition}
We shall also say that \eqref{eq:schro} is (small-time) approximately controllable if the set of (small-time) approximately reachable operators coincides with $\mathcal{S}$ from any $\psi_0\in\mathcal{S}$.

This problem, {and the problem of exact controllability in finite-dimensional systems}, has been widely investigated by control theorists in the last three decades, due to its relevance in applications to physics and chemistry (e.g. absorption spectroscopy, {and time-optimal pulse design in NMR \cite{khaneja-brockett-glaser}}), or computer science (e.g. error correction for robust quantum computation) \cite{preskill,victor}. E.g., if the drift $-\Delta+V$ has compact resolvent (hence purely point spectrum), \eqref{eq:schro} is known to be approximately controllable generically w.r.t. the potentials $V,W_1,\dots,W_m\in L^\infty(M,\R)$ \cite{MS-generic}. The first examples of small-time approximately controllable Schrödinger equations of the form \eqref{eq:schro} were obtained only recently in \cite{beauchard-Pozzoli}.

\medskip

In this article, we study a particular example of Schrödinger equation of the form \eqref{eq:schro}. 
\subsection{An equation posed on $M=\T=\R/2\pi\Z$.}

We consider a Schrödinger equation of the form \eqref{eq:schro},
\begin{align}
i\partial_t\psi(t,x)=&\big(-\partial^2_x+V(x)+u_1(t)\cos(x)+u_2(t)\sin(x)+\nonumber\\
&u_3(t)\cos(2x)+u_4(t)\cos(4x) \big)\psi(t,x),\label{eq:rotor}
\end{align}
where $V\in L^\infty(\T,\R)$.
%\begin{align}\label{eq:rotor}
%&M=\T,\quad m=2d, \quad V\in L^\infty(\T,\R),\\
%& W_{2j-1}(x)=\sin\langle b_j , x\rangle,\quad W_{2j}(x)=\cos\langle b_j , x\rangle,j=1,\dots,d,\nonumber\\
%&b_1=(1,0,\dots,0),\quad b_2=(0,1,\dots,0),\dots\nonumber\\
%&b_{d-1}=(0,\dots,1,0),\quad b_d=(1,\dots,1)\nonumber.
%\end{align}
This setting models the rotational motion of a rigid molecule whose dynamics are confined to a plane, controlled by electromagnetic fields (of constant directions and tunable amplitudes) coupled to four Fourier modes (see, e.g., the recent physical review \cite{koch} and the references therein). It also describes a (linear-in-state) Bose-Einstein condensate in an optical lattice with tunable depth and phase~\cite{sugny}, and two additional controls on higher Fourier modes. 

The approximate controllability of system \eqref{eq:rotor} was established in \cite{BCCS} with finite-dimensional Galerkin approximations and averaging techniques. The small-time approximate controllability of \eqref{eq:rotor}, which had remained an open question for a decade \cite{boussaid-caponigro-chambrion}, was also recently established in \cite{beauchard-pozzoli2} with completely different techniques, inspired by an infinite-dimensional geometric control approach introduced in \cite{duca-nersesyan}. 
\subsection{Our contribution}
Small-time approximate controllability of \eqref{eq:rotor} implies the theoretical capability of driving state transfers in arbitrarily small times (with the flaw of using of course controls with unbounded amplitudes). These results shed light on the theoretical and practical problem of quantum speed limit. 
Unfortunately, the technique developed in \cite{beauchard-pozzoli2} does not provide explicit control laws for achieving state transitions, but it only ensures the existence of such controls. In other words, in a crucial passage of the proof, the explicit character of the control is lost and only existence can be asserted. More precisely, this happens when invoking a celebrated diffeomorphism decomposition result. 
Let ${\rm Diff}^0(\T)$ be the connected component at the identity of the group of diffeomorphisms of $\T$.
Given a diffeomorphism $P\in {\rm Diff}^0(\T^d)$ isotopic to the identity, owing to the simplicity of the group  ${\rm Diff}^0(\T^d)$ established by Thurston \cite{thurston}, there exists $n\in \N$ and vector fields $f_1,\dots,f_n \in {\rm Vec}(\T^d)$ such that
$$P=\Phi^1_{f_n}\circ \cdots \circ \Phi^1_{f_1},$$
where $\Phi^t_f$ denotes the flow at time $t$ of the vector field $f$. In general, given $P \in {\rm Diff}^0(\T^d)$, it is highly nontrivial to guess the number $n$ and the vectors $f_1,\dots,f_n$ for which such a decomposition holds. 
Let us now assume that we are given a finite family of vector fields $g_1,\ldots,g_k \in {\rm Vec}(\T^d)$ whose flows composition generates approximately the group ${\rm Diff}^0(\T^d)$ (such a family exists, see e.g. \cite[Theorem 6.6]{agrachev-sarychev3}). Then, the problem of approximating $P$ with these flows turns out to be a control problem with infinite-dimensional state space, i.e., ${\rm Diff}^0(\T^d)$.

To overcome this issue, one can fix a finite family of vector fields $g_1,\ldots,g_k \in {\rm Vec}(\T)$ whose products of flows generate approximately the group ${\rm Diff}^0(\T)$ (such a family exists, see e.g. \cite[Theorem 6.6]{agrachev-sarychev3}). Then, the task of approximating $P$ with these flows is rephrased as a control problem with infinite-dimensional state space, i.e., ${\rm Diff}^0(\T)$. This standpoint has been adopted in several recent articles \cite{agrachev-sarychev3,scagliotti}. However, as we shall see in the statement of Theorem~\ref{thm:sm_time_app} below, when dealing with quantum systems, we do not address the problem of approximating $P$, but rather the problem of approximating in $L^2$ the target wavefunction $\det(DP)^{1/2}(\psi_0\circ P)$, where $\psi_0$ is the initial wavefunction. 

This may seem surprising, as the Schrödinger equation is not a transport equation. Nevertheless, a $L^2$-unitary transport mechanism is hidden in the Schrödinger equation and can be followed with specific control trajectories, as recently shown in \cite{beauchard-pozzoli2}, and as we recall for completeness in Section \ref{sec:diffeo_ctrl}.

Once the link with transport equations is made, our approximation task is then similar to the ensemble control problem, or the control problem for the Liouville equation, introduced in \cite{brockett} (for more recent theoretical and numerical studies on the subject we also refer to \cite{borzi,borzi2}). There, one looks for controls which are robust with respect to uncertainties on the initial condition. Such goal should not be confused with problems where one looks for controls robust with respect to dispersion in internal parameters of the system, also called ensemble control problems, e.g. in the NMR community \cite{khaneja-li,li}. %However, our control laws here depend only on the time variable $t$ (the dependence on the space-variable $x$ is fixed by the initial family of vector fields $g_1,\dots,g_k$), whereas in \cite{brockett} the control laws were allowed to depend also on the space-variable (they were, in other words, feedback control laws; for more recent theoretical and numerical studies on the subject we also refer to \cite{borzi,borzi2}).

The problem of approximating in $L^2$ the target wavefunction $\det(DP)^{1/2}(\psi_0\circ P)$, from any initial wavefunction $\psi_0\in L^2(\T,\C)$ and for any diffeomorphism $P\in {\rm Diff}^0(\T)$ was theoretically solved in \cite{beauchard-pozzoli2}. However, the control laws were not explicit. 

In this article, our aim is to numerically compensate for the lack of constructiveness of \cite{beauchard-pozzoli2} by showing that the explicit character of the control needed to steer the system towards the state
$\det(DP)^{1/2}(\psi_0\circ P)$ can be recovered through an approximation procedure.

The idea of using flows of control-affine systems for diffeomorphism approximation was proposed by \cite{agrachev-sarychev3}, and it has been a cornerstone for recent developements on the mathematical foundations of Machine Learning \cite{scagliotti2023normalizing,agrachev_letrouit}.
Here, we pursue an approach similar to that proposed in \cite{scagliotti}.
That is, the diffeomorphism approximation task was formulated as an infinite-dimensional optimal control problem in ${\rm Diff}^0(\R^d)$. Moreover, aiming to have a numerically tractable problem, in \cite{scagliotti} the author employed $\Gamma$-convergence arguments for reducing the original infinite-dimensional state space setting to an optimal control problem concerning the simultaneous steering of a finite (large enough) ensemble of points.

For the sake of simplicity, here we design explicit control laws for specific but relevant state transfers of \eqref{eq:rotor} (namely, from the ground to one of the first excited states, and from the ground to a highly oscillating state), posed on $\T$. However, similar methods could in theory be applied for arbitrary state transfers, and for similar Schrödinger equations posed on $\T^d, d> 1$.

A natural further question concerns the genericity of the control potentials $W_1,\dots,W_m$ leading to small-time approximate controllability (such a genericity problem was solved for large-time approximate controllability in \cite{MS-generic}, and was also recently studied in the context of neural networks \cite{agrachev_letrouit}): this will be the subject of future investigation.

\subsection{Structure of the paper}
The paper is organised as follows: in Section \ref{sec:diffeo_ctrl} we recall the relationship between small-time approximate controllability of the Schrödinger equation \eqref{eq:rotor} and the approximate controllability of ${\rm Diff}^0(\T)$. This section is based on the results obtained in \cite{beauchard-pozzoli2}. In Section \ref{sec:Ensemble_OC} we pose the problem of approximating a given diffeomorphism as an ensemble optimal control problem. This section is based on the results obtained in \cite{scagliotti}. In Section \ref{sec:numerics} we present the original contribution of this paper, that is, we numerically obtain explicit controls for driving specific quantum state transfers.

%\section{REDUCTION FROM THE CONTROL OF SCHRÖDINGER EQUATION TO THE CONTROL OF DIFFEOMORPHISMS}
\section{From controlled Schrödinger equation to diffeomorphisms control} \label{sec:diffeo_ctrl}
We consider the following product formula for $\tau>0$ and $\varphi\in C^\infty(\T,\R)$
$$e^{i\frac{\varphi}{2\tau}}e^{i(\Delta-V)}e^{-i\frac{\varphi}{2\tau}}=\exp\left(i\tau(\Delta-V)+\mathcal{T}_{\nabla \varphi}+i\frac{|\nabla\varphi|^2}{4\tau}\right), $$
where $\mathcal{T}_f$ is a first-order skew-adjoint differential operator on $L^2(\T,\C)$ with domain $H^1(\T,\C)$, acting as
$\mathcal{T}_f(\psi) \coloneqq \langle f,\nabla \psi\rangle+\frac{1}{2}{\rm div}(f)\psi$.
Hence, by the Trotter-Kato product formula \cite[Theorem VIII.31]{rs2}, we have that 
\begin{equation} \label{strong_limits}
\left( 
e^{i \frac{|\nabla \varphi|^2}{4 n\tau}  }
e^{i  \frac{\varphi}{2 \tau} }
e^{i\frac{\tau}{n}(\Delta-V)}
e^{-i \frac{\varphi}{2 \tau} }
\right)^{n}
 \underset{n \to \infty}{\longrightarrow} 
e^{i \tau (\Delta-V) + \mathcal{T}_{\nabla \varphi}}
\end{equation}
strongly (i.e., tested on any $\psi_0\in L^2(\T,\C)$, and in the $L^2$-norm). Moreover, owing to \cite[Theorem VIII.21 \& Theorem VIII.25(a)]{rs1}, the following strong convergence holds as well:
$$e^{i \tau (\Delta-V) + \mathcal{T}_{\nabla\varphi}}
\quad \underset{\tau \to 0}{\longrightarrow} \quad
e^{ \mathcal{T}_{\nabla\varphi} }.$$
Then, we consider $\varphi(x)=\cos(x)$, and we notice that
$|\nabla \varphi|^2=\sin^2(x)=\frac{1-\cos(2x)}{2}$. 
Analogously, we take $\varphi(x)=\sin(x)$ and we observe that
$|\nabla \varphi|^2=\cos^2(x)=\frac{1+\cos(2x)}{2}$.
Since $\cos(x),\cos(2x),\cos(4x)$ are all control operators appearing in \eqref{eq:rotor}, {the limits \eqref{strong_limits} with $\varphi=\cos(x)$ (hence $|\nabla \varphi|^2=-\cos(2x)/2$ modulo additive constants), or $\varphi=\sin(x)$ (hence $|\nabla \varphi|^2=\cos(2x)/2$ modulo additive constants), or $\varphi=\cos(2x)$ (hence $|\nabla \varphi|^2=-2\cos(4x)$ modulo additive constants) are all approximate small-time dynamics of \eqref{eq:rotor}}, and we obtain that the following states  
\begin{equation} \label{eq:STAR1} e^{\mathcal{T}_{u\sin(x)\partial_x}}\psi_0,e^{\mathcal{T}_{v\cos(x)\partial_x}}\psi_0,
e^{\mathcal{T}_{w\sin(2x)\partial_x}}\psi_0, \quad u,v,w\in\R,
\end{equation}
are small-time approximately reachable from any $\psi_0\in L^2(\T,\C)$. 
Therefore, by the Trotter-Kato product formula, 
\begin{equation}\label{eq:bracket1}
\left(e^{\frac{-1}{tn}\mathcal{T}_f}e^{-t\mathcal{T}_g}  e^{\frac{1}{tn}\mathcal{T}_f} e^{t\mathcal{T}_g}  \right)^n
\underset{n \to \infty}{\longrightarrow }
\exp\Big(\frac{-1}{t}\mathcal{T}_f+ e^{-t\mathcal{T}_g}  \frac{1}{t}\mathcal{T}_f e^{t\mathcal{T}_g} \Big)
\end{equation}
and by \cite[Lemma 26]{beauchard-pozzoli2}
\begin{equation}\label{eq:bracket2}
\exp\left(\frac{-1}{t}\mathcal{T}_f+ e^{-t\mathcal{T}_g}  \frac{1}{t}\mathcal{T}_f e^{t\mathcal{T}_g} \right)
 \underset{t \to 0}{\longrightarrow } 
e^{\mathcal{T}_{[f,g]}},
\end{equation}
where the above limits are in the strong sense. Since 
$$[\cos(x)\partial_x,\sin(x)\partial_x]=\partial_x,$$
we also get that the states
\begin{equation}\label{eq:STAR2}
e^{\mathcal{T}_{s\partial_x}}\psi_0, \quad s\in\R, 
\end{equation}
are small-time approximately reachable from any $\psi_0$. Notice that, by the method of characteristics and Liouville formula,
\begin{align*}
\left( e^{t \mathcal{T}_f} \psi \right)(x)= 
|D_x\Phi_f^t(x)|^{1/2} \, \psi\big( 
\Phi_f^t(x)
%\mathfrak{e}^{tf} x 
\big) = e^{\frac{1}{2} \int_0^t {\rm div} f( 
\Phi_f^s(x)
%\frak{e}^{sf} x
) \de s} \, \psi \big( 
\Phi_f^t(x)
%\mathfrak{e}^{tf} x 
\big),
\end{align*}
where we used the abbreviation $|D_x\Phi_f^t(x)|\coloneqq \det D_x\Phi_f^t(x)$ for the determinant of the Jacobian.
At this point, it is convenient to define the following fields
\begin{equation} \label{eq:def_controlled_fields}
    g_0(x) \coloneqq \partial_x, \,\, g_1(x) \coloneqq \sin(x)\partial_x, \,\, g_2(x) \coloneqq \sin(2x)\partial_x
\end{equation}
for every $x\in\T$.
The following result was obtained in \cite[Theorems 13 \& 15]{beauchard-pozzoli2}, for a slightly different system (that is, \eqref{eq:rotor} with $u_3\equiv u_4\equiv 0$). Thanks to the presence of the additional control operators $\cos(2x),\cos(4x)$, here we can furnish a simplified proof.

\begin{theorem} \label{thm:sm_time_app}
For any $P\in {\rm Diff}^0(\T),\psi_0\in L^2(\T,\C)$, the state $|DP|^{1/2}(\psi_0\circ P)$ is small-time approximately reachable for \eqref{eq:rotor} from $\psi_0$.
\end{theorem}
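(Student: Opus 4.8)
The plan is to read the target state $|DP|^{1/2}(\psi_0\circ P)$ as the image of $\psi_0$ under a \emph{unitary operator} $U_P$ attached to $P$, to show that the operators attached to finite compositions of the flows of $g_0,g_1,g_2$ produce small-time approximately reachable states, and then to approximate $P$ itself by such compositions. Concretely, for $Q\in{\rm Diff}^0(\T)$ set $U_Q\psi\coloneqq|DQ|^{1/2}(\psi\circ Q)$ on $L^2(\T,\C)$; by the chain rule $U_{Q_1}U_{Q_2}=U_{Q_2\circ Q_1}$, each $U_Q$ is unitary, and the characteristics/Liouville identity recalled just before \eqref{eq:def_controlled_fields} says exactly that $e^{t\mathcal{T}_{g_i}}=U_{\Phi^t_{g_i}}$ for every $t\in\R$ and $i\in\{0,1,2\}$. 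The discussion preceding the statement already gives that $U_{\Phi^t_{g_i}}\phi$ is small-time approximately reachable from \emph{any} $\phi\in L^2(\T,\C)$, for every $t\in\R$ and $i\in\{0,1,2\}$.

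First I would record two soft closure properties of the relation ``$\psi_1$ is small-time approximately reachable from $\psi_0$''. Since \eqref{eq:schro-simplified} is linear in the state, its solution map for a fixed control is a unitary operator; hence, splitting the error budget and the (small) time over finitely many legs and concatenating the corresponding controls, the relation is transitive and closed under $L^2$-limits in $\psi_1$ (the unitarity of the intermediate propagators is what keeps the accumulated errors controlled, up to a global phase). Combined with the previous paragraph, this yields: for every finite word $Q=\Phi^{t_1}_{g_{i_1}}\circ\cdots\circ\Phi^{t_N}_{g_{i_N}}$ the state $U_Q\psi_0$ is small-time approximately reachable from $\psi_0$, and so is every $L^2$-limit of such states.

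It then remains to approximate $P$ by such words of flows. Here I would invoke the controllability of the diffeomorphism group: by \cite[Theorem 6.6]{agrachev-sarychev3} (see also the discussion in \cite{beauchard-pozzoli2}), the subgroup of ${\rm Diff}^0(\T)$ generated by the flows $\{\Phi^t_{g_i}\}_{t\in\R,\,i\in\{0,1,2\}}$ is dense, in the $C^\infty$ and in particular the $C^1$ topology, provided the Lie algebra generated by $g_0,g_1,g_2$ is dense in ${\rm Vec}(\T)$. This last point is an elementary bracket computation: $[g_0,g_1]=\cos(x)\partial_x$, $[g_0,g_2]=2\cos(2x)\partial_x$, and $[\sin(x)\partial_x,\sin(kx)\partial_x]=\tfrac12\big((k-1)\sin((k+1)x)-(k+1)\sin((k-1)x)\big)\partial_x$; starting from $\sin(x)\partial_x=g_1$, $\sin(2x)\partial_x=g_2$ and iterating this identity (with $k\geq 2$), together with $[g_0,\cdot]$, one obtains every $\sin(kx)\partial_x$ and $\cos(kx)\partial_x$, hence all trigonometric-polynomial vector fields, which are dense in ${\rm Vec}(\T)=C^\infty(\T)\partial_x$. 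Choosing $Q_n\to P$ in $C^1(\T,\T)$ and checking $U_{Q_n}\psi_0\to U_P\psi_0$ in $L^2$ — split $U_{Q_n}\psi_0-U_P\psi_0$ into $(|DQ_n|^{1/2}-|DP|^{1/2})(\psi_0\circ P)$, which vanishes because $|DQ_n|^{1/2}\to|DP|^{1/2}$ uniformly and $\psi_0\circ P\in L^2$, plus $|DQ_n|^{1/2}(\psi_0\circ Q_n-\psi_0\circ P)$, which vanishes by approximating $\psi_0$ in $L^2$ by a continuous function and using a change-of-variables bound on $\|\,\cdot\circ Q_n\|_{L^2}$ — the closure under $L^2$-limits from the previous step then finishes the argument.

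I expect the substantive obstacle to be exactly the density of flow-compositions in ${\rm Diff}^0(\T)$: this is the genuinely infinite-dimensional geometric-control ingredient, and I would dispatch it by citing \cite[Theorem 6.6]{agrachev-sarychev3}, so that the only computation actually carried out at that point is the finite bracket identity above that shows $g_0,g_1,g_2$ generate a dense Lie subalgebra of ${\rm Vec}(\T)$ (note that $g_2$ is essential here: $g_0,g_1$ alone generate only the three-dimensional Lie algebra spanned by $\partial_x,\sin(x)\partial_x,\cos(x)\partial_x$). Everything else — transitivity of small-time reachability, the identification $e^{t\mathcal{T}_{g_i}}=U_{\Phi^t_{g_i}}$, and continuity of $Q\mapsto U_Q\psi_0$ from $C^1$ to $L^2$ — is routine.
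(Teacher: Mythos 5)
Your proof is correct, and its overall architecture matches the paper's: both arguments reduce the state transfer to approximating $P$ by compositions of flows of $g_0,g_1,g_2$, via the identification $e^{t\mathcal{T}_f}=U_{\Phi^t_f}$ and the small-time reachability of these elementary factors established before the statement, with \cite[Theorem~6.6]{agrachev-sarychev3} supplying the geometric-control core. The genuine difference is in the decomposition step. The paper first invokes Thurston's simplicity of ${\rm Diff}^0(\T^d)$ to write $P$ \emph{exactly} as a finite product $\Phi^1_{f_k}\circ\cdots\circ\Phi^1_{f_1}$ of time-one flows of arbitrary vector fields, and only then uses Agrachev--Sarychev to approximate each factor by flows of elements of ${\rm Lie}\{g_0,g_1,g_2\}$; you bypass Thurston entirely and invoke Agrachev--Sarychev directly for the density of the subgroup generated by $\{\Phi^t_{g_i}\}$ in ${\rm Diff}^0(\T)$ --- which is consistent with how the paper itself glosses that theorem in the introduction --- at the price of having to verify the bracket-generating hypothesis, which you do with a correct explicit computation (the paper leaves this implicit; your observation that $g_0,g_1$ alone generate only a three-dimensional Lie algebra, so that $g_2$ is essential, is a useful sanity check). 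You also spell out two points that the paper compresses into ``hence, the conclusion follows'': the transitivity and closure under $L^2$-limits of small-time approximate reachability, resting on the unitarity of the propagators, and the continuity of $Q\mapsto U_Q\psi_0$ from the $C^1$ topology to $L^2$. Each route buys something: the paper's is shorter on the algebra, while yours is more self-contained and avoids the appeal to a deep simplicity theorem.
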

\begin{proof}
{We showed in \eqref{eq:STAR1} and \eqref{eq:STAR2} that $e^{\mathcal{T}_{ug_i}}\psi_0, u\in \R, i=1,2,3$, are small-time approximately reachable states. Hence, using \eqref{eq:bracket1} and \eqref{eq:bracket2}, for any $f\in {\rm Lie}\{g_0,g_1,g_2\}$, the state $e^{\mathcal{T}_f}\psi_0$ is small-time approximately reachable.} By \cite[Theorem 6.6]{agrachev-sarychev3}, for any $f\in {\rm Vec}(\T)$ there exists 
%$\{f_n\}_{n\in\N}\subset {\rm Lie}\{\partial_x,\sin(x)\partial_x,\sin(2x)\partial_x\} $
$\{f_n\}_{n\in\N}\subset {\rm Lie}\{g_0,g_1,g_2\} $
such that
$\Phi_{f_n}^1 \to  \Phi_f^1$
in $C^\infty$ as $n \to \infty$. By the dominated convergence theorem in $L^2$, we get
$e^{\mathcal{T}_{f_n}}\psi_0 \to e^{\mathcal{T}_{f}}\psi_0$ as $n \to \infty$.
{Hence, for any $f\in {\rm Vec}(\T)$ and any $\psi_0\in L^2(\T,\C)$, the state $e^{\mathcal{T}_f}\psi_0$ is small-time approximately reachable.} 
By \cite{thurston}, there exists $k\in \N$ and vector fields $f_1,\dots,f_k\in {\rm Vec}(\T)$ such that
$P=\Phi^1_{f_k} \circ \cdots \circ \Phi^1_{f_1}$.
{Thus, we have
$$|DP|^{1/2}(\psi_0\circ P)=e^{\mathcal{T}_{f_n}}\dots e^{\mathcal{T}_{f_1}}\psi_0,$$
and we conclude that $|DP|^{1/2}(\psi_0\circ P)$ is small-time approximately reachable.}
\end{proof}
The previous proof also shows that we can approximate in $L^2$ the state $|DP|^{1/2}(\psi_0\circ P)$ with products of the form 
$$e^{\mathcal{T}_{f_n}}\cdots e^{\mathcal{T}_{f_1}}\psi_0,$$
where 
%$f_j\in \{\partial_x,\sin(x)\partial_x,\sin(2x)\partial_x\}$. 
$f_j\in \{ g_0,g_1,g_2\}$. 
Hence, we have reduced our task to the approximation of a given $P\in {\rm Diff}^0(\T)$ through product of flows of the form
$$\Phi^1_{f_n} \circ \cdots \circ \Phi^1_{f_1},$$
where 
%$f_j\in \{\partial_x,\sin(x)\partial_x,\sin(2x)\partial_x\}$. 
$f_j\in \{ g_0,g_1,g_2\}$.
We shall explain how to numerically tackle this point in the forthcoming sections.
\begin{remark}\label{rmk:moser}
Note that, by decomposing the wavefunction in polar coordinates $\psi=\rho e^{i\varphi},$ with radial part $\rho:=|\psi|\geq 0, \rho\in L^2(\T,[0,\infty)), \|\rho\|_{L^2}=1,$ and angular part $\varphi\in L^2(\T,\R)$, we can control $\psi$ by separately controlling $\rho$ and $\varphi$. The control of the angular part can be performed (approximately and in arbitrarily small times) with explicit controls, as done in \cite{duca-nersesyan} (see also \cite{chambrion-pozzoli,duca-pozzoli} for the control on the angular part {in Schrödinger bilinear PDEs posed, resp., on the $2$-dimensional sphere and on the euclidean space of arbitrary dimension}). We are thus left to find explicit controls for steering the radial part $\rho=|\psi|$. Note also that, thanks to a celebrated theorem of Moser \cite{moser}, for any couple of initial and final states $\rho_0,\rho_1\in C^\infty(\T,(0,\infty)))$ such that $\|\rho_0\|_{L^2}=\|\rho_1\|_{L^2}$, there exists $P\in {\rm Diff}^0(\T)$ such that $|DP|^{1/2}(\rho_0\circ P)=\rho_1$. By density and approximation, for any couple $\rho_0,\rho_1\in L^2(\T,[0,\infty))$ such that $\|\rho_0\|_{L^2}=\|\rho_1\|_{L^2}$, and any $\varepsilon>0$, there exists $P\in {\rm Diff}^0(\T)$ such that $\||DP|^{1/2}(\rho_0\circ P)-\rho_1\|_{L^2}<\varepsilon$.
\end{remark}
\section{Ensemble optimal control formulation} \label{sec:Ensemble_OC}

In this section, we introduce the framework that we employ later for the numerical construction of the quantum state transfer {$\psi_0\to | DP|^{1/2}(\psi_0\circ P)$}.
The building block of our approach is the
following controlled system on $\T$:
\begin{equation} \label{eq:ctrl_ODE}
    \dot x_u(t) =  \sum_{i=0}^2 g_i\big(x_u(t)\big) u_i(t) \quad \mbox{for a.e. } t\in [0,1], 
\end{equation}
where $g_0,g_1,g_2$ are the vector fields introduced in \eqref{eq:def_controlled_fields}, and $t\mapsto u(t)=\big(u_0(t), u_1(t), u_2(t) \big)$ is the control signal used to steer the system. 
Here, we allow the control $u$ to vary in $\U\subseteq L^2([0,1],\R^3)$, where $\U$ is a weakly closed subspace.
For every $u\in \U$ and for every initial condition $x_u(0)=x_0\in \T$, the corresponding solution of \eqref{eq:ctrl_ODE} is absolutely continuous and is defined throughout the evolution interval $[0,1]$. For every $u\in \U$, we denote with $\Phi_u^1\colon \T\to\T$ the flow at time $t=1$ induced by the time-varying vector field $\sum_{i=0}^2g_iu_i$.
Then, given $\psi_0,\psi_1\in \mathcal{S}$, we consider the functional $\J^\alpha \colon \U \to \R$ defined as follows
\begin{equation} \label{eq:def_funct}
    \begin{split}
        \J^\alpha(u) \coloneqq \frac{\alpha}{2}\|u\|_{L^2}^2  + \min_{\theta\in[0,2 \pi)} 
        \| |D \Phi_u^1|^{1/2} (\psi_0\circ \Phi_u^1) - e^{i\theta}\psi_1 \|_{\mathcal{H}}^2
        %\int_\T \left||D \Phi_u^1|^{1/2} \psi_0\big( \Phi_u^1(x) \big) - e^{i\theta}\psi_1(x) \right|^2\de x
    \end{split}
\end{equation}
for every $u\in\U$, where $\alpha>0$ tunes the $L^2$-regularization.
With a classical argument involving the direct method of calculus of variations, it is possible to show that $\J^\alpha$ admits minimizers (see \cite[Thm.~3.2]{Scag23}).
In the next result we relate the values of the parameter $\alpha$ to the quality of the state transfer achieved by the  minimizers of $\J^\alpha$. 

\begin{proposition} \label{prop:argmin}
    Let $\J^\alpha$ be defined as in \eqref{eq:def_funct} with $\U=L^2$, and let us assume that there exists $P\in {\rm Diff}^0(\T)$ such that $\psi_1 = (DP)^{1/2} (\psi_0\circ P)$. Moreover, for every $\alpha>0$ let us define
    \begin{equation*}
        \begin{split}
            \kappa(\alpha) \coloneqq  \sup \bigg\{ 
        \min_{\theta\in[0,2 \pi)} 
        \| |D \Phi_{u^\star}^1|^{1/2} (\psi_0\circ \Phi_{u^\star}^1) - e^{i\theta}\psi_1 \|_{\mathcal{H}}^2 : %\qquad \qquad 
        {u^\star} \in \arg\min \J^\alpha
        \bigg\}.
        \end{split}
    \end{equation*}
    Then, we have that $\lim_{\alpha\to 0}\kappa(\alpha)=0$.
\end{proposition}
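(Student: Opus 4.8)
The plan is to argue by contradiction combined with a Γ-convergence / diagonal argument. Suppose the conclusion fails; then there is $\delta>0$ and a sequence $\alpha_k\to 0$ together with minimizers $u_k^\star\in\arg\min\J^{\alpha_k}$ such that the transfer error term in $\kappa(\alpha_k)$ stays $\ge\delta$. The first step is to produce a good \emph{competitor} from the hypothesis: since $\psi_1=(DP)^{1/2}(\psi_0\circ P)$ with $P\in{\rm Diff}^0(\T)$, the discussion following Theorem~\ref{thm:sm_time_app} (Thurston's decomposition together with the Agrachev--Sarychev approximation \cite[Theorem 6.6]{agrachev-sarychev3}) shows that $P$ can be approximated in $C^\infty$, hence $(DP)^{1/2}(\psi_0\circ P)$ in $L^2$, by finite compositions of time-one flows of $g_0,g_1,g_2$. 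Each such composition can be realized as $\Phi^1_{\bar u}$ for a control $\bar u\in\U$ that is piecewise constant in time (allocating disjoint subintervals of $[0,1]$ to the successive vector fields with suitably scaled amplitudes), provided $\U$ is rich enough to contain these controls — this is the point where one needs $\U$ to be a closed subspace large enough to contain piecewise-constant controls (e.g. $\U=L^2([0,1],\R^3)$), and I would state this as the standing assumption. Thus for every $\eta>0$ there is $\bar u_\eta\in\U$ with $\min_\theta\||D\Phi^1_{\bar u_\eta}|^{1/2}(\psi_0\circ\Phi^1_{\bar u_\eta})-e^{i\theta}\psi_1\|_{\mathcal H}^2\le\eta$.

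The second step is the minimality comparison. Fix $\eta>0$ and the competitor $\bar u_\eta$. By minimality of $u_k^\star$ for $\J^{\alpha_k}$,
\[
\frac{\alpha_k}{2}\|u_k^\star\|_{L^2}^2+\min_\theta\||D\Phi^1_{u_k^\star}|^{1/2}(\psi_0\circ\Phi^1_{u_k^\star})-e^{i\theta}\psi_1\|_{\mathcal H}^2\le\frac{\alpha_k}{2}\|\bar u_\eta\|_{L^2}^2+\eta.
\]
Dropping the nonnegative regularization term on the left gives that the transfer error of $u_k^\star$ is at most $\tfrac{\alpha_k}{2}\|\bar u_\eta\|_{L^2}^2+\eta$. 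Now take $k\to\infty$: since $\|\bar u_\eta\|_{L^2}$ is a fixed constant (independent of $k$) and $\alpha_k\to0$, we get $\limsup_{k\to\infty}\big(\text{transfer error of }u_k^\star\big)\le\eta$. Since $\eta>0$ was arbitrary, $\limsup_k(\text{transfer error of }u_k^\star)=0$, contradicting the assumption that it is $\ge\delta$ for all $k$. Hence $\lim_{\alpha\to0}\kappa(\alpha)=0$.

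I expect the only genuine subtlety to be the realizability of the Thurston/Agrachev--Sarychev approximating compositions as flows $\Phi^1_{\bar u}$ of the specific time-varying field $\sum_i g_i u_i$ with $u\in\U$: one must check that reparametrizing time on subintervals and rescaling the amplitude of a single $g_i$ indeed yields the time-one flow of the original $g_i$ (a standard autonomous rescaling, $\Phi^1_{cg_i}=\Phi^c_{g_i}$), and that the resulting control lies in the admissible class $\U$. The rest is the direct-method fact that $\J^\alpha$ has minimizers (already granted via \cite[Thm.~3.2]{Scag23}) and the elementary comparison above; continuity of $u\mapsto\Phi^1_u$ and of the evaluation functional in $L^2$, also used implicitly, is standard for this control-affine system on a compact manifold. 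No compactness of the minimizing sequence $\{u_k^\star\}$ is actually needed, which keeps the argument short.
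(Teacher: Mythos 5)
Your argument is correct and is essentially the paper's own proof: both rest on producing, via Theorem~\ref{thm:sm_time_app} (i.e.\ the Thurston/Agrachev--Sarychev approximation of $P$ by flows realizable as $\Phi^1_{\bar u}$ with $\bar u\in\U$), a competitor with arbitrarily small transfer error, and then comparing $\J^\alpha(u^\star)\leq\J^\alpha(\bar u)$ and letting $\alpha\to 0$. The only difference is cosmetic --- you wrap the comparison in a proof by contradiction along a sequence $\alpha_k\to 0$, whereas the paper states the direct quantitative bound $\kappa(\alpha)\leq\epsilon$ for $\alpha\leq\epsilon/\|\hat u\|_{L^2}^2$; your remark on the realizability of the composed flows as admissible controls in $\U$ is a fair point that the paper leaves implicit in its appeal to Theorem~\ref{thm:sm_time_app}.
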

\begin{proof}
    Owing to Theorem~\ref{thm:sm_time_app}, for every $\epsilon>0$ there exists $\hat u \in \U$ such that
    $$\min_{\theta\in[0,2 \pi)} \| |D \Phi_{\hat u}^1|^{1/2} (\psi_0\circ \Phi_{\hat u}^1) - e^{i\theta}\psi_1 \|_{\mathcal{H}}^2\leq \frac\epsilon2. $$
    Hence, for every $\alpha\leq \bar\alpha\coloneqq \epsilon/\|\hat u\|_{L^2}^2$, we obtain that
    \begin{equation*}
        \begin{split}
            \min_{\theta\in[0,2 \pi)} 
        \| |D \Phi_{u^\star}^1|^{1/2} (\psi_0\circ \Phi_{u^\star}^1) - e^{i\theta}\psi_1 \|_{\mathcal{H}}^2 \leq \J^\alpha(u^\star) 
         \leq \J^\alpha(\hat u) \leq 
        \frac\epsilon2 + \frac\alpha2 \|\hat u\|_{L^2}^2 \leq \epsilon
        \end{split}
    \end{equation*}
    for every $$u^\star\in \arg\min \J^\alpha,$$ yielding $\kappa(\alpha)\leq \epsilon$ for every $\alpha\in (0,\bar\alpha)$.
\end{proof}

\begin{remark} \label{rmk:non_reach}
    The previous result can be extended also to cases when the target state $\psi_1\in\mathcal{S}$ is not approximately reachable, or when $\U \not =  L^2$. Namely, in such a scenario, we obtain that
    \begin{equation*}
        \begin{split}
            \lim_{\alpha\to 0} \kappa(\alpha) 
            = \inf \bigg\{ 
        \min_{\theta\in[0,2 \pi)} 
         \| |D \Phi_{u}^1|^{1/2}  (\psi_0\circ \Phi_{u}^1) - e^{i\theta}\psi_1 \|_{\mathcal{H}}^2 :  u \in \U
        \bigg\},
%        \\ &= \inf \bigg\{         \min_{\theta\in[0,2 \pi)} \| |D P|^{1/2}  (\psi_0\circ P) - e^{i\theta}\psi_1 \|_{\mathcal{H}}^2 :  P \in {\rm Diff}^0(\T) \bigg\}
        \end{split}
    \end{equation*}
    where the quantity at the right-hand side is bigger than $0$ and provides a lower bound on the approximation error in such a situation.
    \end{remark}
    \begin{remark}
    In view of Proposition~\ref{prop:argmin}, on the one hand, setting $\alpha$ as small as possible sounds a desirable option, since the lower is its value, the better is the state transfer achieved by the corresponding minimizers of $\J^\alpha$. On the other hand, the coercivity of $\J^\alpha$---which is crucial for the existence of minimizers---relies on the penalization on the $L^2$-norm of the control. For this reason, when $\alpha\ll 1$, the minimization problem gets harder, both from the theoretical and the numerical perspective.
    %In conclusion, the tuning of $\alpha$ represents a trade-off between the accuracy of the transfer and the coercivity of $\J^\alpha$. 
\end{remark}

\begin{remark}
    As recently shown in~\cite[Corollary~3.3]{trade-off_invariance}, excluding at most countably many exceptional values of $\alpha$, we have that 
    %$\kappa(\alpha)= \min_{\theta\in[0,2 \pi)} \| |D \Phi_{u^\star}^1|^{1/2} (\psi_0\circ \Phi_{u^\star}^1) - e^{i\theta}\psi_1 \|_{\mathcal{H}}^2$ 
    \begin{equation} \label{eq:trade-off}
        \kappa(\alpha)= \min_{\theta\in[0,2 \pi)} 
        \| |D \Phi_{u^\star}^1|^{1/2} (\psi_0\circ \Phi_{u^\star}^1) - e^{i\theta}\psi_1 \|_{\mathcal{H}}^2
    \end{equation}
    \emph{for every} $u^\star\in \arg\min \J^\alpha$. In other words, if $\alpha$ is not exceptional, the right-hand side of \eqref{eq:trade-off} is constant as $u^\star$ varies in $\arg\min \J^\alpha$.
\end{remark}

The result in Proposition~\ref{prop:argmin} paves the way for the construction of approximated state transfers through the minimization of $\J^\alpha$.
However, each of its evaluations requires the computation of the transfer error (i.e., the second term at the right-hand side of \eqref{eq:def_funct}), which in turn needs the resolution of \eqref{eq:ctrl_ODE} for every $x_u(0)=x_0$ as $x_0$ ranges in $\T$. 
Since solving infinitely many Cauchy problems is clearly unfeasible in practice, this point might potentially be a major obstacle for a numerical approach.
Fortunately, this issue has already been addressed in ensemble control \cite{Scag23,Scag25} by taking advantage of $\Gamma$-convergence tools.
More precisely, inspired by the approach followed by \cite{scagliotti} in the Euclidean setting, if $\psi\in \mathcal{S} \cap C^0(\T,\C)$, we make the following approximation:
\begin{equation*}
    \begin{split}
        \| \psi \|_{\mathcal{H}}^2 = 
    \int_\T |\psi(x)|^2 \de \mu(x)
    \approx 
    \int_\T |\psi(x)|^2 \de \mu_N(x)
    = \frac1N \sum_{j=1}^N |\psi(x_j)|^2,
    \end{split}
\end{equation*}
where $\mu$ is the Haar measure on $\T$, $\mu_N \coloneqq  \frac1N \sum_{j=1}^N \delta_{x_j}$, and $\{x_1,\ldots,x_N\}$ is a lattice of $N$ equi-spaced points in $\T$. 
Then, we define the functional 
$\J^\alpha_N \colon \U \to \R$ as follows
\begin{equation} \label{eq:def_funct_N}
    \begin{split}
        \J^\alpha_N(u) \coloneqq \frac{\alpha}{2}\|u\|_{L^2}^2  \quad +\min_{\theta\in[0,2 \pi)}
        \int_\T \big||D \Phi_u^1|^{1/2} (\psi_0\circ \Phi_u^1) - e^{i\theta}\psi_1 \big|^2 \de \mu_N,
    \end{split}
\end{equation}
for every $u\in\U$. Similarly as for $\J^\alpha$, weak lower semi-continuity in $L^2$ and coercivity yield $\arg \min \J^\alpha_N \neq \emptyset$. 
At this point, is natural to wonder if the minimizers of $\J^\alpha_N$ attain quasi-optimal values when used to evaluate the original functional $\J^\alpha$. In this regard, $\Gamma$-convergence provides a positive answer.
We recall that the sequence of functionals $(\J^\alpha_N)_{N\geq1}$ is $\Gamma$-convergent to $\J^\alpha$ as $N\to\infty$ with respect to the $L^2$-weak topology if: 
\begin{enumerate}
    \item[a)] For every $u\in \U$ and for every sequence $(u_N)_{N\geq1}$ such that $u_N\rightharpoonup_{L^2} u$ as $N\to\infty$, we have $\J^\alpha(u)\leq\liminf_{N\to\infty}\J^\alpha_N(u_N)$.
    \item[b)] For every $u\in \U$, there exists a sequence $(\tilde u_N)_{N\geq1}$ such that $\tilde u_N\rightharpoonup_{L^2} u$ as $N\to\infty$ and $\J^\alpha(u)=\lim_{N\to\infty}\J^\alpha_N(\tilde u_N)$.
\end{enumerate}
For a thorough introduction to this subject, we recommend the monograph \cite{D93}.

\begin{theorem}[$\Gamma$-convergence] \label{thm:G_conv}
    Let us consider $\psi_0,\psi_1 \in \mathcal{S} \cap C^0(\T,\C)$, and let $\J^\alpha$ and $\J^\alpha_N$ be the functionals defined in \eqref{eq:def_funct} and \eqref{eq:def_funct_N}, respectively.
    Then, for every $\alpha>0$, the family of functionals $(\J^\alpha_N)_{N\geq1}$ is $\Gamma$-convergent to $\J^\alpha$ as $N\to\infty$ with respect to the $L^2$-weak topology.
    Consequently, we have that:
    \begin{itemize}
        \item Given any sequence $(u_N^\star)_{N\geq1} \subset \U$ such that $u_N^\star\in \arg\min \J_N^\alpha$ for every $N\geq 1$, it follows that $(u_N^\star)_{N\geq1}$ is \emph{$L^2$-strongly pre-compact}.
        \item If $(u^\star_{N_j})_{j\geq 1}$ is a subsequence such that $\|u^\star_{N_j} - u^\star\|_{L^2}\to 0$ as $j\to\infty$, then $u^\star\in\arg \min \J^\alpha$.
        \item Finally, we have that
    \begin{equation*}
        \lim_{N\to\infty}  
        \J^\alpha_N(u_N^\star) =
        \lim_{N\to\infty} \min_\U \J^\alpha_N 
        = \min_\U \J^\alpha.
    \end{equation*}
    \end{itemize}
\end{theorem}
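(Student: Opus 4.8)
The plan is to establish $\Gamma$-convergence of $(\J^\alpha_N)_{N\geq 1}$ to $\J^\alpha$ in the $L^2$-weak topology, and then deduce the three bullet points from the fundamental theorem of $\Gamma$-convergence combined with an equi-coercivity argument. First I would recall that, since $\alpha>0$, every $\J^\alpha_N$ (and $\J^\alpha$ itself) is coercive with respect to the $L^2$-norm: the sublevel sets $\{u : \J^\alpha_N(u) \leq c\}$ are contained in the ball $\{\|u\|_{L^2}^2 \leq 2c/\alpha\}$, uniformly in $N$. This uniform coercivity is what upgrades the weak pre-compactness of minimizing sequences to \emph{strong} pre-compactness once $\Gamma$-convergence is in hand (any $L^2$-weakly convergent minimizing sequence has bounded norm, and along it the transfer-error terms converge, so the norms converge too, giving strong convergence in the Hilbert space $L^2$).

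The core is the $\Gamma$-convergence itself, which splits into the $\liminf$ inequality and the existence of a recovery sequence. For both I would first show that the map $u \mapsto |D\Phi_u^1|^{1/2}(\psi_0\circ \Phi_u^1)$ is continuous from $L^2$-weak to, say, $C^0(\T,\C)$ (or at least $L^2$-strong): a weakly convergent sequence $u_N \rightharpoonup u$ is bounded in $L^2$, hence the corresponding time-varying vector fields $\sum_i g_i u_{i,N}$ are equi-integrable, so by a standard ODE-stability argument (Gronwall, using that $g_0,g_1,g_2$ and their derivatives are bounded on the compact $\T$) the flows $\Phi_{u_N}^1$ converge to $\Phi_u^1$ in $C^1(\T,\T)$, whence $|D\Phi_{u_N}^1|^{1/2}(\psi_0\circ\Phi_{u_N}^1) \to |D\Phi_u^1|^{1/2}(\psi_0\circ\Phi_u^1)$ uniformly (here one uses $\psi_0 \in C^0$). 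Given this continuity, the transfer-error term of $\J^\alpha_N$, which is $\int_\T |\cdot|^2 \de\mu_N$ with $\mu_N$ the empirical measure of $N$ equispaced points, converges \emph{uniformly on $L^2$-bounded sets} to the transfer-error term of $\J^\alpha$, because $\mu_N \rightharpoonup \mu$ weakly-$*$ and the integrand $|\cdot|^2$ depends continuously (uniformly on bounded sets of controls) on $u$; the $\min_\theta$ over the compact set $[0,2\pi)$ is preserved under uniform convergence. The $L^2$-norm penalty $\frac\alpha2\|u\|_{L^2}^2$ is the same in every $\J^\alpha_N$ and is $L^2$-weakly lower semicontinuous (and the transfer error is weakly continuous by the above), so $\J^\alpha$ is itself weakly lower semicontinuous; combining this with the uniform convergence of the transfer terms yields simultaneously the $\liminf$ inequality (for any $u_N \rightharpoonup u$) and a \emph{constant} recovery sequence $u_N \equiv u$, which settles $\Gamma$-convergence.

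With $\Gamma$-convergence and equi-coercivity established, the three consequences are the standard package: equi-coercivity plus $\Gamma$-convergence gives convergence of minima $\min_\U \J^\alpha_N \to \min_\U \J^\alpha$ and pre-compactness (in fact, by the coercivity upgrade above, $L^2$-strong pre-compactness) of any sequence of minimizers $(u_N^\star)$; every strong limit point of such a sequence is a minimizer of $\J^\alpha$ by the closedness property of $\Gamma$-limits; and finally $\J_N^\alpha(u_N^\star) = \min_\U \J_N^\alpha \to \min_\U \J^\alpha$ by definition of minimizer together with the convergence of minima. I expect the main obstacle to be the continuity statement $u \mapsto |D\Phi_u^1|^{1/2}(\psi_0\circ\Phi_u^1)$ from $L^2$-weak to uniform convergence — in particular establishing $C^1$-convergence of the flows from merely weak convergence of the controls, which requires care with the Jacobian equation, and handling the case $\psi_0 \in C^0$ but not smoother (where one approximates $\psi_0$ uniformly by smooth functions and uses that composition with a uniformly convergent family of diffeomorphisms is uniformly continuous). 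Everything else is bookkeeping with the abstract $\Gamma$-convergence machinery, for which I would cite \cite{D93}.
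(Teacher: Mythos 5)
The paper does not prove this theorem directly: its entire proof is a citation to \cite[Thm.~4.6 and Cor.~4.8]{Scag23}, of which the statement is a particular case. Your proposal instead reconstructs the argument from scratch, and it is correct; in fact it follows the same strategy as the cited reference (equi-coercivity from the $\alpha$-penalty, weak-to-strong continuity of the control-to-flow map, $\Gamma$-liminf via weak lower semicontinuity of the norm plus weak continuity of the transfer error, constant recovery sequences, and the Radon--Riesz upgrade from weak to strong convergence of minimizers using convergence of the penalization terms). The two genuinely delicate points are exactly the ones you flag: (i) that $u_N\rightharpoonup u$ in $L^2$ implies $\Phi^1_{u_N}\to\Phi^1_u$ in $C^1(\T)$ --- this works precisely because the system \eqref{eq:ctrl_ODE} is control-affine, so the trajectory and variational equations are affine in $u$, the relevant products pair a weakly convergent sequence with a uniformly convergent one, and compactness of $\T$ together with smoothness of $g_0,g_1,g_2$ gives the uniform Gronwall bounds; and (ii) that the empirical transfer errors converge to the continuous one uniformly on $L^2$-bounded sets of controls, which needs a modulus of continuity for $x\mapsto |D_x\Phi^1_u(x)|^{1/2}\psi_0(\Phi^1_u(x))$ that is uniform over such sets (available since second spatial derivatives of the flow are uniformly bounded there, and $\psi_0$ is continuous on the compact $\T$) and uniform in the phase $\theta$, since the $\min_\theta$ sits outside the integral. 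Your treatment of both points is sound, so the only difference from the paper is that you supply the proof it delegates to \cite{Scag23}.
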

\begin{proof}
    This result follows from \cite[Thm.~4.6 and Cor.~4.8]{Scag23} as a particular case.
\end{proof}
\begin{remark}
    We stress the fact that in Theorem~\ref{thm:G_conv} the continuity of $\psi_0,\psi_1$ is crucial. Moreover, the fact that $\alpha>0$ is needed for the equi-coercivity of the functionals $(\J^\alpha_N)_{N\geq1}$.
%    Finally, we observe that the conclusions of Theorem~\ref{thm:G_conv} hold even when $\psi_1$ is not small-time reachable from $\psi_0$. In this latter case, however, $$\liminf_{\alpha\to0} \lim_{N\to\infty} \min_{\theta\in[0,2 \pi)} \| |D \Phi_{u^\star_N}^1|^{1/2}  (\psi_0\circ \Phi_{u^\star_N}^1) - e^{i\theta}\psi_1 \|_{\mathcal{H}}^2 >0. \!$$ 
\end{remark}
The results reported in Theorem~\ref{thm:sm_time_app} provide the theoretical backbone for the numerical experiments of the forthcoming section.

\section{Numerical experiments} \label{sec:numerics}

In this section, we take advantage of the reformulation of the state transfer as a diffeomorphism approximation task (Section~\ref{sec:diffeo_ctrl}), and of the infinite-to-finite dimension reduction made possible by $\Gamma$-convergence (Section~\ref{sec:Ensemble_OC}).
Blending these standpoints, we propose a numerical construction for achieving an approximate state transfer.

\subsection{Evolution approximation}
We consider the control system \eqref{eq:ctrl_ODE} over the time horizon $[0,1]$.
We divide the evolution interval into $M=961$ equi-spaced nodes $t_0=0,\ldots, t_M=1$, with $t_{k}-t_{k-1}=2^{-6}/15$ for every $k=1,\ldots,M$.
In the experiments, we introduce the finite-dimensional subspace $\U^M \subset L^2([0,1],\R^3)$, where
\begin{equation*}
    \begin{split}
        \U^M\coloneqq \big\{ 
    u\in L^2([0,1],\R^3) \mid 
    \,\, \forall k=1,\ldots,M, \,\exists c_k: u(s)=c_k \, \forall s\in [t_{k-1},t_k]
    \big\}.
    \end{split}
\end{equation*}
Since the elements in $\U^M$ are piecewise constant on pre-determined intervals, we denote with $u= (u_k)_{k=1,\ldots,M} =(u_{k,i})_{k=1,\ldots,M}^{i=0,1,2}$ the elements of $\U^M$. 
For expositional convenience, we define the mapping $G\colon \T\times \R^3\to \R$ as
\begin{equation*}
    G(x,w) \coloneqq \sum_{i=0}^2 w_ig_i(x)
\end{equation*}
for every $(x,w)\in \T\times \R^3$.
We discretize the dynamics in \eqref{eq:ctrl_ODE} using the explicit Euler scheme, i.e., for every assigned initial position $x_0$, we compute
\begin{equation} \label{eq:discr_evol}
    x_{k} = x_{k-1} + h G(x_{k-1},u_k)
\end{equation}
for every $k=1,\ldots,M$. In \eqref{eq:discr_evol} we set $h=2^{-6}/15$.
For every $u\in\U^M$, we denote with $F_u^1\colon \T\to \T$ the discrete terminal-time flow induced by \eqref{eq:discr_evol} and corresponding to $u$.

\subsection{Numerical computation of state transfers}
Following the steps described in Section~\ref{sec:Ensemble_OC}, we construct $\mu_N \coloneqq  \frac1N \sum_{j=1}^N \delta_{x_j}$, where $N=628$ and $\{x_1,\ldots,x_N\}$ is a lattice of equi-spaced points with size $0.01$.
In the experiments, we consider the ground state $\psi_0 \equiv \frac{1}{\sqrt{2\pi}}$.
Moreover, the target quantum states $\psi_1\in \mathcal{S} \cap C^0(\T,[0,\infty))$ that we choose \emph{take values in $\R_+$}. In view of Remark \ref{rmk:moser}, this assumption is not restrictive, and the problem of approximating $\psi_1$ as $|DP|^{1/2}(\psi_0\circ P)$ is well-posed.
The objective functional $\J_N^\alpha\colon \U^M\to \R$ related to the discretized ensemble optimal control problem has the form: 
\begin{equation} \label{eq:obj_funct_num}
    \begin{split}
        \J^\alpha_N(u) \coloneqq \frac{\alpha}{2}\|u\|_{L^2}^2  \qquad +
        \int_\T \big||D F_u^1|^{1/2} (\psi_0\circ F_u^1) - \psi_1 \big|^2 \de \mu_N,
    \end{split}
\end{equation}
where $|DF(x)|=\det DF(x)$. We used $\alpha=10^{-7}$.
\begin{remark}
    If we compare \eqref{eq:def_funct_N} and \eqref{eq:obj_funct_num}, we observe that in the latter, the minimum over $[0,2\pi)$ accounting for a global phase has disappeared.
    This is due to the fact that we shall work with initial and target states $\psi_0,\psi_1$ valued in $\R_+$. Hence, the minimum on the global phase is always attained for $\theta=0$.
\end{remark}

We solved the optimization problem on Python, and we minimized \eqref{eq:obj_funct_num} using {the Damped-BFGS scheme (see \cite[Procedure~18.3]{nocedal1999numerical}), with an adaptive choice of the descent step-size based on Wolfe's conditions (see \cite[Section~3.1]{nocedal1999numerical})}. We empolyed the automatic differentiation tools of Pytorch.
We ran the scripts on a MacBookPro$^{\mbox{\textregistered}}$  with $16$~GB of RAM and with Apple~M1~Pro CPU.

We studied the state transfer problem for two different target states.
Namely, we first set
\begin{equation} \label{eq:psi_cos_2x}
    \psi_1'(x) = \frac{1}{\sqrt{\pi}} \big| \cos(x) \big|,
\end{equation}
and then
\begin{equation} \label{eq:psi_cos_3x}
    \psi_1''(x) = \frac{1}{\sqrt{2\pi}} \sqrt{1 + \frac{4}{5}\cos(3x)}.
\end{equation}
In both cases, we started with a random initial guess where the components of $u=(u_{k,i})_{k=1,\ldots,M}^{i=0,1,2}$ were sampled as i.i.d.~realizations of the gaussian distribution $\mathcal{N}(0,0.01)$.
We report the respective results in Fig.~\ref{fig:transfer_1} and Fig.~\ref{fig:transfer_2}. In the figures, the top pictures represent the graphs of $\psi_0$ (black), of the target state (blue), and of the state obtained through the transformation induced by the computed control (magenta), while the bottom images report the profiles of the computed controls responsible for the two state transfers.

%%%%%%%%%%%%%%%%%%%

    \begin{figure}
        \centering
        \includegraphics[width= 0.49\linewidth]{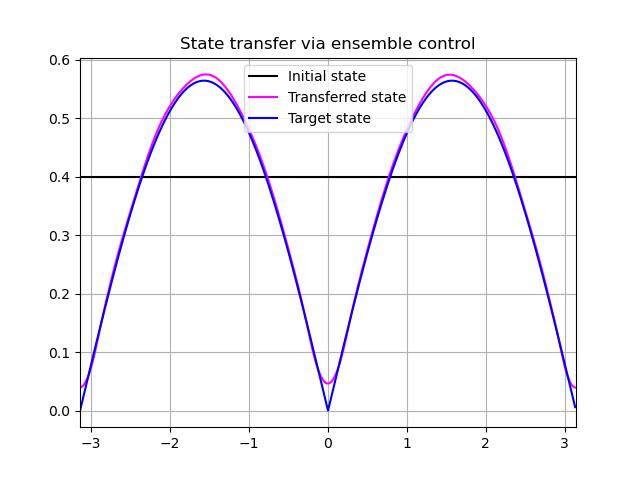}
        \includegraphics[width= 0.49\linewidth]{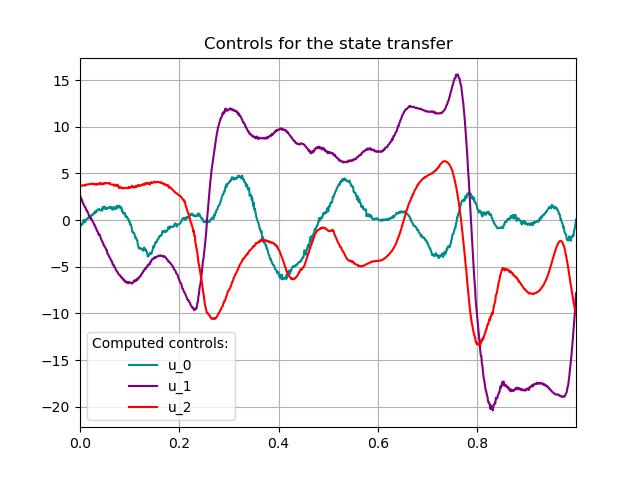}
        \caption{State transfer from the ground state $\psi_0 \equiv \frac{1}{\sqrt{2\pi}}$ to the target state $\psi_1'$ (see \eqref{eq:psi_cos_2x}). We used a random initialization for the control $u\in \U^M$, and we performed $\sim 2.2\cdot 10^3$ iterations of the Damped-BFGS scheme ($\sim15$~mins of computational time). The resulting relative mismatch in the $L^2$-norm amounts at $0.0225$.}
        \label{fig:transfer_1}
    \end{figure}

    \begin{figure}
        \centering
        \includegraphics[width= 0.49\linewidth]{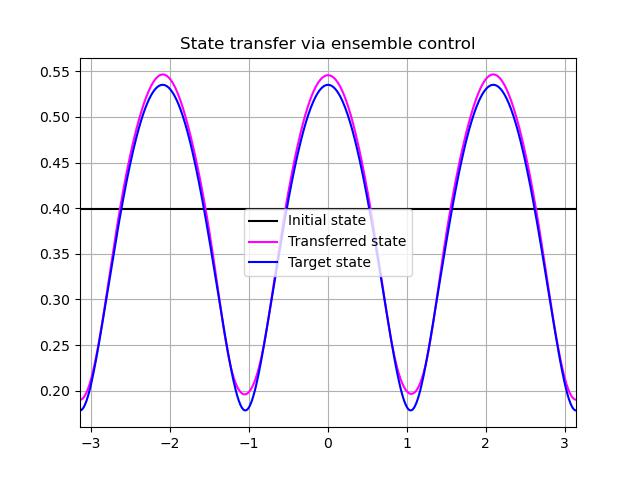}
        \includegraphics[width= 0.49\linewidth]{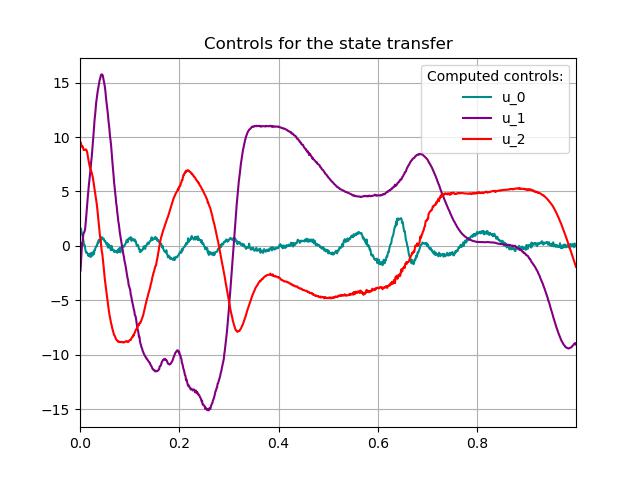}
        \caption{State transfer from the ground state $\psi_0 \equiv \frac{1}{\sqrt{2\pi}}$ to the target state $\psi_1''$ (see \eqref{eq:psi_cos_3x}). We used a random initialization for the control $u\in \U^M$, and we performed $2.8\cdot 10^3$ iterations of the the Damped-BFGS scheme ($\sim20$~mins of computational time). The resulting relative mismatch in the $L^2$-norm amounts at $0.0227$.}
        \label{fig:transfer_2}
    \end{figure}

%%%%%%%%%%%%%%%%%%%

\section*{Conclusions}
In this paper, we proposed a method for constructing explicit control laws steering approximate state transfers in a bilinear Schrödinger PDE posed on the torus. Such systems are experimentally tested in physics and represent powerful platforms for quantum technology. Indeed, they model e.g.~the orientational motion of rigid molecules, as well as the dynamics of cold atoms trapped in periodic optical lattices (known as Bose-Einstein condensates).

Even though small-time approximate controllability results have been recently established, determining explicitly such controls is, in general, challenging.

In the first part of the paper, we used geometric control arguments to rephrase the state transfer as a task of diffeomorphisms approximation.
Then, relying on the powerful tools of $\Gamma$-convergence, we reduced the infinite-dimensional diffeomorphism approximation to a finite-ensemble control problem.

Finally, by taking advantage of this bridging, we performed some experiments and found numerically explicit controls that achieve approximate state transfers.

\section*{Acknowledgments}

This research has been funded in whole or in part by the French National Research Agency (ANR) as part of the QuBiCCS project ”ANR-24-CE40-3008-01”.
This project has received financial support from the CNRS through the MITI interdisciplinary programs.

A.S.~acknowledges partial support from INdAM-GNAMPA.

Finally, the authors would like to thank the organizers of the conference ``Frontiers in sub-Riemannian geometry" (held at CIRM, Marseille, France, in November 2024) for the stimulating environment, and the CIRM for the kind hospitality, where some of the ideas of this work were conceived.

\bibliographystyle{abbrv}
\bibliography{references}

\end{document}